\theoremstyle{plain}
\newtheorem{theorem}{Theorem}[section]
\newtheorem{lemma}[theorem]{Lemma}
\newtheorem{proposition}[theorem]{Proposition}
\theoremstyle{definition}
\newtheorem{definition}[theorem]{Definition}
\theoremstyle{remark}
\newtheorem{remark}[theorem]{Remark}
\begin{document}

\title{On the probability of satisfying a word in nilpotent groups of class $2$}
\author{Matthew Levy\\\\Imperial College London}
\date{}
\maketitle

\begin{abstract}
Let $G$ be a finite group of nilpotency class $2$ and $w$ a group word. In this short paper we show that the probability that a random $n$-tuple of elements from $G$ satisfies $w$ is at least one over the order of $G$. This answers a special case of a conjecture of Alon Amit.
\end{abstract}


\section{Introduction}
Let $G$ be a finite group, $w(x_{1},...,x_{n})$ a group word and denote by $N(G,w=c)$ the number of $n$-tuples $\textbf{g}=(g_{1},...,g_{n})\in G^{(n)}$ satisfying $w(\textbf{g})=c$, that is
$$
N(G,w=c)=|\{\textbf{g}\in G^{(n)}:w(\textbf{g})=c\}|.
$$
Also, denote by $P(G,w=c)$ the probability that a random $n$-tuple $\textbf{g}=(g_{1},...,g_{n})\in G^{(n)}$ satisfies $w(\textbf{g})=c$, that is
$$
P(G,w=c)=\frac{N(G,w=c)}{|G|^{n}}.
$$
When $c=1$ we will just write $N(G,w)$ and $P(G,w)$ and we will say that $\textbf{g}$ \textit{satisfies} $w$ if $w(\textbf{g})=1$. If $G$ were abelian, then the word map
$$
w:G^{(n)}\rightarrow G,
$$ 
defined by a word $w$ is a homomorphism and it is clear that 
$$
N(G,w)=|\mbox{Ker } w|=\frac{|G|^{n}}{|\mbox{Im }w|}\geq |G|^{n-1}
$$ 
and so $P(G,w)\geq\frac{1}{|G|}$.\\\\
It is a conjecture of Alon Amit (see $[\mathbf{1}]$ or $[\mathbf{2}]$) that if $G$ is a nilpotent group then $P(G,w)\geq\frac{1}{|G|}$. Here we establish the result for nilpotency class $2$ groups in the following theorem.

\begin{theorem}
Let $G$ be a finite group of nilpotency class $2$. Then for any group word $w$, $P(G,w)\geq\frac{1}{|G|}$.
\end{theorem}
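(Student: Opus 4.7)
The plan is to reduce to the case where $G$ is a finite $p$-group and then analyse the word map via a two-step fibration coming from the central extension $1\to Z\to G\to A\to 1$, where $Z:=[G,G]$ and $A:=G/Z$. Since a finite nilpotent group is the direct product of its Sylow subgroups, and because $P(G_1\times G_2,w)=P(G_1,w)P(G_2,w)$ while $|G_1\times G_2|=|G_1||G_2|$, the inequality is multiplicative over direct products; one may therefore assume $G$ is a $p$-group of class at most $2$, and (the abelian case being already handled) that $Z\neq 1$. Using centrality of $Z$, any word can be put in the collected normal form
\[
w(x_1,\ldots,x_n)=x_1^{e_1}\cdots x_n^{e_n}\prod_{i<j}[x_i,x_j]^{f_{ij}},
\]
and I set $d:=\gcd(e_1,\ldots,e_n)$.

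Reducing modulo $Z$, $w$ induces the abelian word map $\bar w\colon A^n\to A$ given by $\bar{\mathbf g}\mapsto \bar g_1^{e_1}\cdots \bar g_n^{e_n}$, which is a homomorphism; a solution $w(\mathbf g)=1$ forces $\bar{\mathbf g}\in E:=\ker\bar w$, a subgroup of order $|A|^{n-1}|A[d]|$. Fixing $\bar{\mathbf g}\in E$ and any lift $\mathbf g_0$, the centrality of $Z$ yields the identity
\[
w(\mathbf g_0\mathbf z)=w(\mathbf g_0)\,\phi(\mathbf z),\qquad \phi(\mathbf z):=z_1^{e_1}\cdots z_n^{e_n},
\]
valid for every $\mathbf z\in Z^n$, in which $\phi\colon Z^n\to Z$ is a genuine homomorphism with image $Z^d$. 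Hence on each such fibre the word $w$ is an affine translate of $\phi$, and the number of lifts with $w=1$ equals $|\ker\phi|=|Z|^{n-1}|Z[d]|$ if the coset $w(\mathbf g_0)Z^d$ is trivial, and $0$ otherwise. The map $\Psi\colon E\to Z/Z^d$, $\Psi(\bar{\mathbf g}):=w(\mathbf g_0)Z^d$, is well-defined, and I obtain the clean formula $N(G,w)=|\ker\phi|\cdot|\Psi^{-1}(0)|$. The desired inequality $N(G,w)\geq|G|^{n-1}$ thereby reduces to the lower bound $|\Psi^{-1}(0)|\geq|A|^{n-1}/|Z[d]|$.

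The hard part is controlling $\Psi$. The natural attempt is to show $\Psi$ is itself a group homomorphism $E\to Z/Z^d$, which would give $|\Psi^{-1}(0)|\geq|E|/|Z[d]|=|A|^{n-1}|A[d]|/|Z[d]|$, even stronger than what is required (by the factor $|A[d]|\geq 1$). Checking this reduces to computing the deviation $w(\mathbf{gh})(w(\mathbf g)w(\mathbf h))^{-1}$, which by the class-$2$ identity $(xy)^e=x^ey^e[y,x]^{\binom{e}{2}}$ and commutator bilinearity is a central product of commutators carrying the exponents $e_ie_j$, $\binom{e_i}{2}$ and $f_{ij}$. The first family is divisible by $d$, and the second is as well when $p$ is odd, but the $f_{ij}$ contributions are not automatically so. The technical heart of the argument will therefore be either to show that the offending terms vanish modulo $Z^d$ once one restricts to $\bar{\mathbf g}\in E$, or, failing that, to pass to a coarser quotient $Z/K$ on which $\Psi$ descends to an honest homomorphism and then recover the bound on $|\Psi^{-1}(0)|$ from an equidistribution argument across the fibres of $Z/Z^d\to Z/K$.
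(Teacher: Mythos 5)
Your structural reductions are all correct and compatible with the paper's: the passage to $p$-groups via Sylow decomposition, the collected normal form $w=x_1^{e_1}\cdots x_n^{e_n}\prod_{i<j}[x_i,x_j]^{f_{ij}}$, the identity $w(\mathbf{g}_0\mathbf{z})=w(\mathbf{g}_0)\phi(\mathbf{z})$ for central $\mathbf{z}$, and the resulting formula $N(G,w)=|\ker\phi|\cdot|\Psi^{-1}(0)|$ are sound. But the proof stops exactly where the theorem begins, and the first route you propose for closing the gap is provably blocked. The map $\Psi$ is genuinely quadratic, not linear, and restricting to $E$ does not make the $f_{ij}$-deviation vanish: take $w=[x_1,x_2]$ and $G$ extraspecial of order $p^3$. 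Then all $e_i=0$, so $d=0$, $Z^d$ is trivial, $E=A^2$, and $\Psi(\bar g_1,\bar g_2)=[g_1,g_2]$ is a nondegenerate alternating form on $A^2$. Choosing noncommuting $a,b$ one has $\Psi(\bar a,1)=\Psi(1,\bar b)=1$ but $\Psi(\bar a,\bar b)=[a,b]\neq 1$, so $\Psi$ is not a homomorphism on $E$ and the offending cross-terms $[g_i,h_j][h_i,g_j]^{f_{ij}}$ do not die modulo $Z^d$. Your second fallback (descend to a coarser quotient $Z/K$ and equidistribute over fibres) is a restatement of the problem rather than an argument: no canonical $K$ is exhibited, and proving equidistribution of these quadratic maps is precisely the counting difficulty the theorem demands.

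The paper resolves this not by analysing $\Psi$ directly but by a change of variables in the relatively free group of class $2$ and exponent $p^m$. Two lemmas, proved by a collection-and-substitution algorithm and justified via the Burnside Basis Theorem, show $w$ is $G$-equivalent to a normal form $y_1^{p^l}\prod_{i=1}^{k}[y_{2i},y_{2i+1}]^{\gamma_i}\,[y_1,h]$ in which the $y_i$ map onto part of a basis of the relatively free group, so distinct commutator blocks involve independent variables --- in effect a symplectic reduction of your quadratic form. Then, fixing one variable from each block and restricting $y_1$ to $G'$ (where $y_1\mapsto y_1^{p^l}$ is a homomorphism and $[y_1,h]$ dies by centrality of $G'$), the remaining map is linear, and $N(G,v)\geq |G|^{k}\cdot|G|^{n-1-k}=|G|^{n-1}$ follows by exactly the kernel count you use for $\phi$. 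In short: your fibration skeleton is a valid alternative framing, but the technical heart --- the normal-form lemmas that linearize the quadratic commutator part --- is absent, and without something equivalent to them the plan does not yield the theorem.
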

This result improves the bound in the nilpotency class $2$ case established in a paper by Nikolov and Segal (see $[\mathbf{3}]$).
\begin{remark}
If the theorem holds true for two groups $G_{1}$ and $G_{2}$ then it holds true for their direct product $G=G_{1}\times G_{2}$ since the word values can be solved componentwise. Let $\textbf{g}=(g_{1},...,g_{n})$ be an $n$-tuple in $G$ then $\textbf{g}=\textbf{a}.\textbf{h}$ where $\textbf{a}=(a_{1},...,a_{n})$ and $\textbf{h}=(h_{1},...,h_{n})$ are $n$-tuples in $G_{1}$ and $G_{2}$ respectively. Then it is clear that $w(\textbf{g})=w(\textbf{a}).w(\textbf{h})$. Hence $P(G,w)\geq P(G_{1},w).P(G_{2},w)$ and the result follows since $|G|=|G_{1}|.|G_{2}|$. In fact, if $A$ is abelian and the theorem holds for a group $H$ which acts on $A$ by automorphisms then the theorem holds for $G=A\rtimes H$.

\end{remark}

\begin{proposition}
Let $G$ be a finite group such that $G=A\rtimes H$ where $A$ and $H$ are subgroups of $G$ and $A$ is abelian. Suppose that $P(H,w)\geq\frac{1}{|H|}$ where $w$ is a group word then $P(G,w)\geq\frac{1}{|G|}$.
\end{proposition}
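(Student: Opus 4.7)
The plan is to use the semidirect product decomposition to split the word map into an $A$-part and an $H$-part, and then to exploit the abelian case (already handled in the introduction) for the $A$-part while invoking the hypothesis $P(H,w)\geq 1/|H|$ for the $H$-part.

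First I would write an arbitrary $n$-tuple in $G$ uniquely as $\mathbf{g}=(a_1h_1,\ldots,a_nh_n)$ with $a_i\in A$ and $h_i\in H$. The key structural observation is that, since $A$ is normal and abelian, one can push all the $a_i$'s to the left in the product defining $w(\mathbf{g})$: conjugating $a_i$ by an element of $H$ uses the action of $H$ on $A$ by automorphisms, and commuting these conjugates past one another is free because $A$ is abelian. Writing $A$ additively, this yields an identity of the form
\[
w(\mathbf{g}) \;=\; \varphi_{\mathbf{h}}(\mathbf{a})\cdot w(\mathbf{h}),
\]
where $\mathbf{h}=(h_1,\ldots,h_n)$, $\mathbf{a}=(a_1,\ldots,a_n)$, and $\varphi_{\mathbf{h}}:A^n\to A$ is a $\mathbb{Z}H$-linear map whose coefficients depend only on $\mathbf{h}$ and on the formal word $w$.

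Since $A\cap H=\{1\}$ in $G$, the equation $w(\mathbf{g})=1$ is equivalent to the pair of equations $w(\mathbf{h})=1$ in $H$ and $\varphi_{\mathbf{h}}(\mathbf{a})=0$ in $A$. Hence
\[
N(G,w) \;=\; \sum_{\mathbf{h}\in H^{(n)}:\;w(\mathbf{h})=1}\bigl|\ker \varphi_{\mathbf{h}}\bigr|.
\]
For each such $\mathbf{h}$, $\varphi_{\mathbf{h}}$ is a homomorphism of finite abelian groups, so $|\ker\varphi_{\mathbf{h}}|=|A|^n/|\mathrm{im}\,\varphi_{\mathbf{h}}|\geq |A|^{n-1}$, exactly as in the abelian calculation at the start of the introduction.

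Putting these bounds together and using the hypothesis on $H$,
\[
N(G,w) \;\geq\; N(H,w)\cdot |A|^{n-1},
\]
and dividing by $|G|^n=|A|^n|H|^n$ gives $P(G,w)\geq P(H,w)/|A|\geq 1/(|H|\cdot|A|)=1/|G|$. The main obstacle I expect is not the arithmetic at the end but justifying the decomposition $w(\mathbf{g})=\varphi_{\mathbf{h}}(\mathbf{a})\cdot w(\mathbf{h})$ cleanly; this is essentially an induction on the length of $w$ combined with the facts that conjugation by $H$ on $A$ is by automorphisms and that $A$ is abelian, but it is the one step where care is needed to ensure the coefficient map $\varphi_{\mathbf{h}}$ really is a homomorphism in $\mathbf{a}$.
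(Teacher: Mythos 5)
Your proposal is correct and takes essentially the same approach as the paper: your identity $w(\mathbf{g})=\varphi_{\mathbf{h}}(\mathbf{a})\cdot w(\mathbf{h})$ is exactly the paper's decomposition $w(a_1h_1,\ldots,a_nh_n)=\prod_{i=1}^{n}a_i^{\phi_i(h_1,\ldots,h_n)}\,w(h_1,\ldots,h_n)$, and your count $|\ker\varphi_{\mathbf{h}}|\geq |A|^{n-1}$ for each of the at least $|H|^{n-1}$ tuples $\mathbf{h}$ with $w(\mathbf{h})=1$ is the paper's argument via the linear map $T_{\mathbf{h}}$. If anything, your description of $\varphi_{\mathbf{h}}$ as a $\mathbb{Z}H$-linear map is more accurate than the paper's labelling of the $\phi_i$ as automorphisms (they are in general endomorphisms built from $\mathbb{Z}H$-coefficients), and your explicit use of $A\cap H=\{1\}$ to split $w(\mathbf{g})=1$ into the two component equations makes precise a step the paper leaves implicit.
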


\begin{proof}
For any $g\in G$ we may write $g=ah$ for unique $a\in A$ and $h\in H$, so if $w$ is a word in $n$ variables we have, for some $a_{i}\in A$ and $h_{i}\in H$,
\begin{eqnarray*}
w(g_{1},...g_{n}) &=& w(a_{1}h_{1},...,a_{n}h_{n})\\
                  &=& \displaystyle\prod_{i=1}^{n}a_{i}^{\phi_{i}(h_{1},...,h_{n})}w(h_{1},...,h_{n}),
\end{eqnarray*}
where the $\phi_{i}(h_{1},...,h_{n})$ are automorphisms of $A$ depending on the $h_{i}$. Note that there are at least $|H|^{n-1}$ $n$-tuples, $\textbf{h}=(h_{1},...,h_{n})\in H^{(n)}$, satisfying $w$. Having fixed such an $\textbf{h}\in H^{(n)}$ consider the induced map on $A^{(n)}$,
\begin{eqnarray*}
T_{\textbf{h}}:A^{(n)} &\longrightarrow& A\\
(a_{1},...,a_{n}) &\longmapsto& \displaystyle\prod_{i=1}^{n}a_{i}^{\phi_{i}\textbf(h)}.
\end{eqnarray*}
Since $A$ is an abelian group, $T_{\textbf{h}}$ is a linear map and the number of $n$-tuples $\textbf{a}\in A^{(n)}$ such that $T_{\textbf{h}}(\textbf{a})=1$ is at least $|A|^{n-1}$. The result follows.
\end{proof}

Since any nilpotent group is a direct product of its Sylow subgroups, by the above remark it will be enough to prove the following theorem:

\begin{theorem}
Let $G$ be a finite $p$-group of nilpotency class 2 where $p$ is a prime. Then for any group word $w$, $P(G,w)\geq\frac{1}{|G|}$.
\end{theorem}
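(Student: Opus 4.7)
The plan is to prove this by induction on $|G|$, with the abelian case following from the homomorphism argument recalled in the introduction. Since $[G,G]\leq Z(G)$, every word value can be put into the collected normal form
\[
w(g_{1},\ldots,g_{n}) = g_{1}^{e_{1}}\cdots g_{n}^{e_{n}}\prod_{i<j}[g_{i},g_{j}]^{c_{ij}}
\]
for integers $e_{i}$ (the exponent sums of the letters) and $c_{ij}$, and the inductive step splits on the $p$-divisibility of the $e_{i}$.

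If some $e_{i}$ is coprime to $p$, I would pick any $N\leq Z(G)$ of order $p$. Centrality of $N$ gives $w(g_{1}n_{1},\ldots,g_{n}n_{n})=w(g_{1},\ldots,g_{n})\prod_{i}n_{i}^{e_{i}}$, and our hypothesis forces the map $N^{n}\to N$, $(n_{i})\mapsto\prod n_{i}^{e_{i}}$, to be surjective. Consequently on every orbit of the free $N^{n}$-action on $\{\mathbf{g}:w(\mathbf{g})\in N\}$ the value $1$ is attained exactly $|N|^{n-1}$ times, which gives $N(G,w)=|N|^{n-1}N(G/N,w)$; the inductive hypothesis applied to $G/N$ then closes this case.

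If instead $p\mid e_{i}$ for every $i$, let $d=\gcd(e_{1},\ldots,e_{n})$ and consider the homomorphism of abelian groups $\psi:(G')^{n}\to G'$, $\mathbf{z}\mapsto\prod z_{i}^{e_{i}}$, whose image is $(G')^{d}$. Parametrising the fibre of $G^{n}\to(G^{\mathrm{ab}})^{n}$ above $\bar{\mathbf{g}}$ by $\mathbf{z}\in(G')^{n}$ and using centrality of $G'$ gives $w(\tilde g_{1}z_{1},\ldots,\tilde g_{n}z_{n})=w(\tilde{\mathbf{g}})\psi(\mathbf{z})$; a straightforward fibre count then produces the factorisation
\[
N(G,w) = |(G')^{d}|^{\,n-1}\,N\bigl(G/(G')^{d},\,w\bigr).
\]
When $(G')^{d}\neq 1$ this is a genuine reduction and the inductive hypothesis applied to $G/(G')^{d}$ closes the case at once.

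The main obstacle I expect is the residual sub-case $(G')^{d}=1$, equivalently $\exp(G')\mid d$. Here $\psi$ is trivial, so $w(\mathbf{g})$ depends only on $\bar{\mathbf{g}}\in(G^{\mathrm{ab}})^{n}$, and the induced map $\widehat w:(G^{\mathrm{ab}})^{n}\to Z(G)$ decomposes as a homomorphism $L$ (coming from the $\prod\tilde g_{i}^{e_{i}}$-part) times the alternating commutator form $C(\bar{\mathbf{g}})=\sum_{i<j}c_{ij}[\bar g_{i},\bar g_{j}]$. Bounding $|\ker L\cap C^{-1}(1)|$ from below by $|G^{\mathrm{ab}}|^{n-1}/|G'|$ becomes a question about counting zeros of the alternating commutator pairing restricted to a subgroup of $(G^{\mathrm{ab}})^{n}$; I would attack it either with a character-sum or Gauss-sum argument exploiting the alternating structure of the commutator, or via a secondary induction that also decreases $|G'|$ or $\exp(G)$.
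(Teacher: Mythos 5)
Your two reduction steps are correct, and they are genuinely different in spirit from the paper's method: where the paper works by rewriting $w$ into a $G$-equivalent word via changes of variables, you obtain exact factorisations $N(G,w)=|N|^{n-1}N(G/N,w)$ (when some exponent sum $e_i$ is prime to $p$) and $N(G,w)=|(G')^{d}|^{n-1}N(G/(G')^{d},w)$ (when $(G')^{d}\neq 1$), both of which check out since the translation action of a central subgroup multiplies $w(\mathbf{g})$ by $\prod n_i^{e_i}$ and the relevant maps are surjective homomorphisms with kernels of the right size. The problem is that your ``residual sub-case'' is not a residue: it is the entire theorem. It contains every word all of whose exponent sums vanish --- in particular $w=\prod_{i<j}[x_i,x_j]^{c_{ij}}$ itself, for which $d=0$ and your reductions do nothing from the start --- and for such words one must bound below the number of zeros of an alternating biadditive pairing $C:(G^{\mathrm{ab}})^{n}\to G'$ between \emph{arbitrary} finite abelian $p$-groups, where the pairing is degenerate in general and the groups are not vector spaces. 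This is exactly what the paper's technical core, Lemma \ref{LemmaVariableCommutator}, accomplishes: an algorithm that, by substitutions legitimised as changes of variables over the free class-$2$ exponent-$p^{m}$ group via the Burnside Basis Theorem (Remark \ref{remark2}), rewrites $\prod_{i<j}[x_i,x_j]^{\alpha_{ij}}$ as a product $\prod_i[y_{2i-1},y_{2i}]^{\gamma_i}$ of commutators in pairwise \emph{disjoint} variables --- a symplectic-style diagonalisation over $\mathbb{Z}/p^{m}\mathbb{Z}$ whose termination rests on a strictly decreasing chain of $p$-valuations $l>l'>l''>\cdots\geq 0$. Only after this diagonalisation does the count become linear algebra (fix one variable of each pair; the word map in the remaining variables is a homomorphism). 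Your proposed substitutes are not obviously adequate: Gauss sums of quadratic forms over $\mathbb{Z}/p^{m}\mathbb{Z}$ are complex and need not combine into a nonnegative excess in the character-sum identity, and the ``secondary induction decreasing $|G'|$ or $\exp(G)$'' is precisely the unsolved problem restated. So the proposal, as it stands, has a genuine gap at its terminal case, and that case is where all the difficulty of the theorem lives.

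There is also a secondary flaw in the structure you assert for the residual case when $p=2$. You decompose $\widehat w=L\cdot C$ with $L$ a homomorphism coming from $\bar g_i\mapsto \tilde g_i^{e_i}$. In class $2$ one has $(gh)^{e}=g^{e}h^{e}[h,g]^{e(e-1)/2}$, and $\exp(G')\mid e$ guarantees $\exp(G')\mid e(e-1)/2$ only when $p$ is odd; for $p=2$, writing $\exp(G')=2^{a}$, the binomial term is only divisible by $2^{a-1}$, so $g\mapsto g^{e}$ need not be additive on $G^{\mathrm{ab}}$, and the defect is a \emph{symmetric} quadratic term in a single variable, not absorbable into the alternating form $C$. (The map $\widehat w$ itself is still well defined into $Z(G)$, since $[x^{e},y]=[x,y]^{e}=1$, but its claimed linear-times-alternating shape is not.) Any completion of your argument would therefore need either the analogue of Lemmas \ref{LemmaVariablePower} and \ref{LemmaVariableCommutator}, or a counting result for quadratic-plus-alternating maps on finite abelian $p$-groups that handles $p=2$ separately.
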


\section{Nilpotent class $2$ groups}

We begin by making the following definition:
\begin{definition}
\label{DefnEquiv}
We will say that two group words $w_{1}$ and $w_{2}$ on $n$ variables are \textit{$G$-equivalent} if $N(G,w_{1}=c)=N(G,w_{2}=c)$ for every $c\in G$.
\end{definition}

\begin{remark}
\label{remark2}
It is clear that relabelling the variables of $w$ gives \textit{$G$-equivalent} words for any group $G$ since the word maps are unchanged. Suppose that $w$ and $w'$ are two group words in $n$ variables such that $w\equiv w'\hspace{0.5 mm}\mbox{mod }R$ where $R$ is a normal subgroup of the free group of rank $n$, $F_{n}$. Then it is easy to see that $w$ and $w'$ are \textit{$G$-equivalent} for any group $G$ that is a homomorphic image of $F_{n}/R$ since $v(G)=1$ for any $v\in R$. Also, suppose $w(x_{1},...,x_{n})$ is a group word and that $w(x_{1},...,x_{n})=v(y_{1},...,y_{n})$ where $v$ is a group word and the $y_{i}$ are words in the $x_{i}$s. Let $\mathfrak{N}_{2,m}$ denote the class of all finite $p$-groups of nilpotency class at most $2$ and of exponent at most $p^m$. If the set $\{y_{1},...,y_{n}\}$ maps onto a basis for the vector space $L_{n}/\Phi(L_{n})$ where $L_{n}$ is the free $\mathfrak{N}_{2,m}$-group on $n$ generators then $w$ is \textit{$G$-equivalent} to the word $v(y_{1},...,y_{n})$ for any $G\in\mathfrak{N}_{2,m}$, since for each $i$ the image of $x_{i}$ in $L_{n}$ can be expressed as a word in the $y_{j}$s. This follows from the Burnside Basis Theorem (see [$\mathbf{4}$]).
\end{remark}
The following commutator identities will be used throughout and are easy to prove: 
\newcounter{counter}
\begin{list}{(\arabic{counter})}{\usecounter{counter}}
\item $[x,y]=x^{-1}x^{y}$;
\item $[xy,z]\equiv[x,z][y,z]\hspace{5 mm}\mbox{mod }\gamma_{3}(F_{n})$;
\item $[x,yz]\equiv[x,y][x,z]\hspace{5 mm}\mbox{mod }\gamma_{3}(F_{n})$;
\item $[x,y]^{-1}\equiv[y,x]\equiv[x^{-1},y]\equiv[x,y^{-1}]\hspace{5 mm}\mbox{mod }\gamma_{3}(F_{n})$;
\item $(xy)^{n}\equiv x^{n}y^{n}[y,x]^{\frac{n(n-1)}{2}}\hspace{5 mm}\mbox{mod }\gamma_{3}(F_{n})$.
\end{list}
Let $w(x_{1},...,x_{n})$ be any group word and fix a group $G\in\mathfrak{N}_{2,m}$. By Hall's Collecting Process (see [$\mathbf{4}$]), we can write $w$ in the form
\begin{eqnarray}
w(x_{1},...,x_{n})=x_{1}^{\alpha_{1}}...x_{n}^{\alpha_{n}}(\displaystyle\prod_{i=1}^{n}\prod_{i<j}[x_{i},x_{j}]^{\beta_{i j}})c,
\end{eqnarray}
where $c\in \gamma_{3}(F_{n})$, $F_{n}$ being the free group of rank $n$ and $\alpha_{i}$, $\beta_{i j}\in\mathbb{Z}$.  We aim to show that the word map given by $w$ is `\textit{equivalent over $G$}', in the sense of definition \ref{DefnEquiv}, to the word map given by a particular word $w'$, where it will be easy to see that $P(G,w')\geq\frac{1}{|G|}$. To do this we will first prove the following lemmas.
\begin{lemma}
\label{LemmaVariablePower}
Let $w(x_{1},...,x_{n})=x_{1}^{\alpha_{1}}...x_{n}^{\alpha_{n}}$ be an element of the free group of rank $n$ and $m\in \mathbb{N}$. 
Then, for any $G\in\mathfrak{N}_{2,m}$, $w$ is \textit{$G$-equivalent} to the word
$$
v(y_{1},x_{2},...,x_{n})=y_{1}^{p^{l}}\displaystyle\prod_{1<i<j}[x_{i},x_{j}]^{\beta_{ij}}\prod_{i=2}^{n}[y_{1},x_{i}]^{\gamma_{i}},
$$
for some $l$, $\beta_{ij}$, $\gamma_{i}\in\mathbb{Z}$.
\end{lemma}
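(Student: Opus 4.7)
The plan is to introduce a new variable $y_1 = x_1^{b_1} x_2^{b_2} \cdots x_n^{b_n}$ with integer weights $b_i$ chosen so that $y_1^{p^l}$ agrees with $x_1^{\alpha_1}\cdots x_n^{\alpha_n}$ up to commutators, then to use the commutator identities (2)--(5) to rewrite any leftover commutator $[x_1,x_j]$ in terms of $[y_1,x_j]$ and commutators among $x_2,\ldots,x_n$, and finally to invoke Remark \ref{remark2}. Concretely, I first dispose of the trivial case in which every $\alpha_i$ is divisible by $p^m$ (take $y_1=x_1$, $l=m$, and all other exponents zero). Otherwise, set $p^l:=\gcd(\alpha_1,\ldots,\alpha_n,p^m)$ and write $\alpha_i=p^l b_i$; by maximality of $l$, at least one $b_i$ is coprime to $p$. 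After relabelling variables (which preserves $G$-equivalence by Remark \ref{remark2}) I may assume $b_1$ is coprime to $p$, and then $y_1:=x_1^{b_1}\cdots x_n^{b_n}$ has the feature that $\{y_1,x_2,\ldots,x_n\}$ maps onto a basis of $L_n/\Phi(L_n)$, which is exactly the hypothesis required by Remark \ref{remark2}.

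The key calculation is to expand $y_1^{p^l}$ in $L_n$. Iterating identity (5) yields the class-$2$ formula
\[
(g_1\cdots g_k)^N = g_1^N\cdots g_k^N \prod_{i<j}[g_j,g_i]^{N(N-1)/2},
\]
valid in $L_n$ since $\gamma_3$ vanishes there; combined with $[a^s,b^t]=[a,b]^{st}$ this gives
\[
y_1^{p^l} = x_1^{\alpha_1}\cdots x_n^{\alpha_n}\prod_{i<j}[x_j,x_i]^{b_i b_j p^l(p^l-1)/2}.
\]
Rearranging, $x_1^{\alpha_1}\cdots x_n^{\alpha_n}$ equals $y_1^{p^l}$ times certain commutators. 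The terms $[x_i,x_j]^{*}$ with $1<i<j$ already have the desired shape. For the remaining terms $[x_1,x_j]^{b_1 b_j p^l(p^l-1)/2}$, applying (2) and (4) gives
\[
[y_1,x_j] = [x_1,x_j]^{b_1}\prod_{k\geq 2,\,k\neq j}[x_k,x_j]^{b_k},
\]
so raising to the integer power $b_j p^l(p^l-1)/2$ rewrites $[x_1,x_j]^{b_1 b_j p^l(p^l-1)/2}$ as $[y_1,x_j]^{b_j p^l(p^l-1)/2}$ multiplied by commutators among $x_2,\ldots,x_n$. Collecting everything produces a word $v(y_1,x_2,\ldots,x_n)$ of the claimed form.

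Since $w$ has been realised as $v(y_1,x_2,\ldots,x_n)$ inside $L_n$ and $\{y_1,x_2,\ldots,x_n\}$ maps onto a basis of $L_n/\Phi(L_n)$, Remark \ref{remark2} yields $G$-equivalence of $w$ and $v$ for every $G\in\mathfrak{N}_{2,m}$. The only substantive obstacle is the commutator bookkeeping in the middle step --- checking that the exponents produced are integers (so that the rewriting of $[x_1,x_j]^{b_1 b_j p^l(p^l-1)/2}$ via division by $b_1$ is legitimate, which it is because we divide only out of the explicit factor $b_1$) and confirming that each use of (2)--(5) is valid modulo $\gamma_3$. The coprimality of $b_1$ with $p$ is used only to secure the basis condition, not the manipulation of exponents.
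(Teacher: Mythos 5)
Your proof is correct and takes essentially the same route as the paper: your $y_1=x_1^{b_1}\cdots x_n^{b_n}$ with $p^l$ the $p$-part of $\gcd(\alpha_1,\ldots,\alpha_n,p^m)$ is exactly the paper's $y_{i_t}$ (built from the minimal $p$-valuation $l_t$), both arguments expand $y_1^{p^l}$ via the class-$2$ power identity $(5)$, and both conclude by the Burnside-basis clause of Remark \ref{remark2}. The only, harmless, deviation is at the elimination step: the paper substitutes $x_{i_t}$ out of the leftover commutators using an integer $r_t$ with $m_t r_t\equiv 1$ modulo the exponent, whereas you observe that the leftover exponents $b_1 b_j p^l(p^l-1)/2$ already contain the explicit factor $b_1$, so $[x_1,x_j]^{b_1}$ can be replaced by $[y_1,x_j]\prod_{k\geq 2,\,k\neq j}[x_k,x_j]^{-b_k}$ without any inversion.
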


\begin{proof}
Let $R=\gamma_{3}(F_{n})F_{n}^{p^{m}}$ and write $w(x_{1},...,x_{n})=x_{i_{1}}^{p^{l_{1}}m_{1}}...x_{i_{k}}^{p^{l_{k}}m_{k}}$, where $i_{1}<i_{2}<...<i_{k}$, $l_{j}, m_{j}\in \mathbb{Z}$, $l_{j}\geq 0$ and the $m_{j}$ are non-zero and coprime to $p$ for all $j$. Choose $l_{t}$ minimal among the $l_{j}$ and let 
$$
y_{i_t}=x_{i_{1}}^{p^{l_{1}-l_{t}}m_{1}}...x_{i_{k}}^{p^{l_{k}-l_{t}}m_{k}}.
$$
Note that in the above expression for $y_{i_t}$ the exponent of $x_{i_t}$ is $m_{t}$. Then
$$
y_{i_t}^{p^{l_{t}}}\equiv x_{i_{1}}^{p^{l_{1}}m_{1}}...x_{i_{k}}^{p^{l_{k}}m_{k}}\displaystyle\prod_{i_{1}\leq i_{p}<i_{q}\leq i_{k}}[x_{i_{p}},x_{i_{q}}]^{-p^{l_{p}+l_{q}-l_{t}}m_{p}m_{q}\frac{p^{l_{t}}-1}{2}} \hspace{5 mm}\mbox{mod }\gamma_{3}(F_{n}),
$$
so that
\begin{eqnarray}
w(x_{1},...,x_{n})\equiv y_{i_t}^{p^{l_{t}}}\displaystyle\prod_{i<j}[x_{i},x_{j}]^{\beta_{i j}}\hspace{5 mm}\mbox{mod }\gamma_{3}(F_{n}),
\end{eqnarray}
for some $\beta_{i j}\in\mathbb{Z}$. Note that $x_{i_{t}}^{m_{t}}=(\prod_{s<t}x_{i_{s}}^{p^{l_{s}-l_{t}}m_{s}} )^{-1}y_{i_t}(\prod_{s>t}x_{i_{s}}^{p^{l_{s}-l_{t}}m_{s}})^{-1}$ and that since $p$ does not divide $m_{t}$ there exists a positive integer $r_{t}$ such that $x_{i_{t}}^{m_{t}r_{t}}\equiv x_{i_{t}}\hspace{1 mm}\mbox{mod }F_{n}^{p^m}$. Substituting the resulting expression for $x_{i_{t}}\hspace{0.5 mm}\mbox{mod }F_{n}^{p^m}$ into $(2)$ we have
$$
w(x_{1},...x_{n})\equiv y_{i_t}^{p^{l_{t}}}\displaystyle\prod_{\underset{i,j\neq i_{t}}{i<j}}[x_{i},x_{j}]^{\beta'_{i j}}\prod_{\underset{i\neq i_{t}}{i=1}}^{n}[y_{i_t},x_{i}]^{\gamma_{i}}\hspace{5 mm}\mbox{mod }R,
$$
for some $\beta'_{i j}, \gamma_{i}\in\mathbb{Z}$. The result follows in view of remark \ref{remark2}.
\end{proof}

\begin{lemma}
\label{LemmaVariableCommutator}
Let $w(x_{1},...,x_{n})=\prod_{i=1}^{n}\prod_{i<j}[x_{i},x_{j}]^{\alpha_{ij}}$ be an element of the free group of rank $n$ and $m\in\mathbb{N}$. 
Then, for any $G\in\mathfrak{N}_{2,m}$, $w$ is \textit{$G$-equivalent} to the word
$$
v(y_{1},...,y_{2k+1},x_{2k+2},...,x_{n})=\displaystyle\prod_{i=1}^{k}[y_{2i-1},y_{2i}]^{\gamma_{2i-1}\hspace{0.5mm}\gamma_{2i}}
$$
for some $\gamma_{ij}\in\mathbb{Z}$, where $2k\leq n$.
\end{lemma}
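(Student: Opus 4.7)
The plan is to interpret $w$ as an alternating $\mathbb{Z}$-bilinear form with matrix $A=(\alpha_{ij})$ on $\mathbb{Z}^{n}$, carry out a symplectic diagonalisation of $A$ working modulo $p^{m}$, and realise each basis change as an explicit substitution that produces a $G$-equivalent word by Remark \ref{remark2}.

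First I would record the effect of the substitution $x_{i}\mapsto x_{i}x_{j}^{c}$ ($i\ne j$) on a word of the form $\prod_{a<b}[x_{a},x_{b}]^{\alpha_{ab}}$. Using the commutator identities (2)--(4), modulo $\gamma_{3}(F_{n})$ one gets another word of the same type whose defining skew matrix is $PAP^{T}$, where $P$ is the elementary matrix $I+cE_{ij}$. Permutations of variables and inversions $x_{i}\mapsto x_{i}^{-1}$ correspond to the other generators of $\mathrm{GL}_{n}(\mathbb{Z})$. In each case the new variables $y_{k}$ map to a basis of $L_{n}/\Phi(L_{n})$, so Remark \ref{remark2} guarantees that $w$ is $G$-equivalent to the transformed word modulo $R=\gamma_{3}(F_{n})F_{n}^{p^{m}}$.

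Next I would invoke the standard normal form for alternating bilinear forms over the local principal ideal ring $\mathbb{Z}/p^{m}\mathbb{Z}$: there exists an invertible $P$ such that $PAP^{T}$ is block-diagonal, consisting of $2\times 2$ skew blocks $\bigl(\begin{smallmatrix}0&\gamma_{i}\\-\gamma_{i}&0\end{smallmatrix}\bigr)$ for $i=1,\ldots,k$, followed by zero entries. This is proved by the familiar inductive argument: locate an off-diagonal entry of minimal $p$-adic valuation, move it to position $(1,2)$ by a permutation, then use integer elementary row/column operations (all of which lift to the substitutions above) to clear the remainder of its row and column; finally recurse on the complementary $(n-2)\times(n-2)$ block.

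After this reduction, $w$ becomes $G$-equivalent to a word $\prod_{i=1}^{k}[y_{2i-1},y_{2i}]^{\gamma_{i}}$ in which the remaining generators ($y_{2k+1}$ together with $x_{2k+2},\ldots,x_{n}$) do not appear. To match the stated form, one writes each integer $\gamma_{i}$ as a product $\gamma_{2i-1}\gamma_{2i}$, e.g.\ by taking $\gamma_{2i}=1$. The point to verify carefully is that each elementary operation is genuinely admissible under Remark \ref{remark2} and that the errors created in $\gamma_{3}(F_{n})F_{n}^{p^{m}}$ at each step can be discarded; the underlying diagonalisation of alternating forms over $\mathbb{Z}/p^{m}$ is classical and is the main conceptual input but presents no real obstacle.
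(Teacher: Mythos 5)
Your proposal is correct, and while the underlying mathematics coincides with the paper's, your packaging is genuinely different. The paper never mentions bilinear forms: it runs an explicit word-level algorithm --- collect the commutators involving the first variable, pick the exponent of minimal $p$-adic valuation, introduce a new variable, substitute the pivot variable out, and then branch through Cases ($1a$), ($2a$), ($1b$), ($2b$) according to comparisons of valuations, with termination guaranteed by a strictly decreasing chain $l>l'>l''>\cdots\geq 0$. That algorithm is exactly an in-line proof of the normal form you cite: your minimal-valuation pivot and row/column clearing is what the paper's case analysis does by hand, and the paper's descending valuations are the price it pays for not choosing a globally minimal pivot at the outset. Your version buys modularity (the combinatorics is delegated to the classical reduction of alternating forms over the local ring $\mathbb{Z}/p^{m}\mathbb{Z}$, where minimality of valuation gives the divisibility needed for clearing), a transparent termination argument, and a once-and-for-all verification that a Nielsen substitution transforms the coefficient matrix by $A\mapsto PAP^{T}$ modulo $\gamma_{3}(F_{n})$, exponents being reducible mod $p^{m}$ since $[x,y]^{p^{m}}\in\gamma_{3}(F_{n})F_{n}^{p^{m}}$; the paper's version buys a self-contained, fully constructive treatment that never appeals to matrix normal-form theory. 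Two small points: restricting to operations generating $\mathrm{GL}_{n}(\mathbb{Z})$ is unnecessarily conservative (its image in $\mathrm{GL}_{n}(\mathbb{Z}/p^{m}\mathbb{Z})$ consists only of matrices of determinant $\pm 1$), but harmless here since Remark \ref{remark2} only requires the $y_{i}$ to map onto a basis of $L_{n}/\Phi(L_{n})$, i.e.\ invertibility mod $p$, so even substitutions $x_{i}\mapsto x_{i}^{u}$ with $p\nmid u$ would be admissible; and the exponent $\gamma_{2i-1}\hspace{0.5mm}\gamma_{2i}$ in the statement is a typographical slip for the double-indexed $\gamma_{2i-1\hspace{0.5mm}2i}$ appearing in the paper's proof, so your device of factoring each $\gamma_{i}$ as a product is not needed.
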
 

\begin{proof}

Let $R=\gamma_{3}(F_n)F_{n}^{p^m}$. We are going to describe an algorithm which shows us that we may write $w$ in the form
$$
w(x_{1},...,x_{n})=\displaystyle\prod_{i=1}^{k}[y_{2i-1},y_{2i}]^{\gamma_{2i-1\hspace{0.5mm}2i}}c
$$
for some $\gamma_{ij}\in\mathbb{Z}$, where $2k\leq n$, the $y_{i}$ are words in the $x_i$s and $c\in \gamma_{3}(F_n)F_{n}^{p^m}$. In particular, each $y_{i}$ is of the  form $x_{i_1}^{p^{l_{i_1}}m_{i_1}}...x_{i_d}^{p^{l_{i_d}}m_{i_d}}$ the $m_{i_j}$ being non-zero and coprime to $p$ with $l_{i_{j}}\geq 0$ and $l_{i_{u_{i}}}=0$ for some $u_{i}$. Moreover, for all $y_i$, $y_j$ with $i\neq j$ we have $i_{u_{i}}\neq j_{u_{j}}$, i.e. $x_{i_{u_{i}}}\neq x_{j_{u_{j}}}$. The result then follows in view of remark \ref{remark2}. We proceed as follows:\\\\
Choose the first non zero $\alpha_{i j}$, with respect to the ordering in the product, say $\alpha_{s_{1} s_{11}}$. Then
\begin{eqnarray*}
w(x_{1},...,x_{n}) &=& \displaystyle\prod_{i<j}[x_{i},x_{j}]^{\alpha_{i j}}\\
                      &=& [x_{s_{1}},x_{s_{11}}^{\alpha_{s_{1} s_{11}}}...x_{s_{1q}}^{\alpha_{s_{1} s_{1{q}}}}]\displaystyle\prod_{s_{1}<i<j}[x_{i},x_{j}]^{\alpha_{i j}},
\end{eqnarray*}
with $s_{1}<s_{11}<s_{12}<...<s_{1q}$ and $\alpha_{s_{1} s_{1j}}$ non-zero for all $j$. Now $\alpha_{s_{1} s_{1j}}=p^{l_{s_{1} s_{1j}}}m_{s_{1} s_{1j}}$ where $l_{s_{1} s_{1j}}\geq 0$ is an integer and $m_{s_{1} s_{1j}}$ is coprime to $p$. So choose $l_{i j}$ minimal among the $l_{s_{1} s_{1j}}$, say $l_{s_{1} s_{1u_{1}}}$, and let
$$
y_{s_{1u_{1}}}=x_{s_{11}}^{p^{l_{s_{1} s_{11}}-l_{s_{1} s_{1u_{1}}}}m_{s_{1} s_{11}}}...x_{s_{1q}}^{p^{l_{s_{1} s_{1q}}-l_{s_{1} s_{1u_{1}}}}m_{s_{1} s_{1q}}}.
$$
Similarly to the previous lemma, 
$$
[x_{s_1},x_{s_{11}}^{\alpha_{s_1 s_{11}}}...x_{s_{1q}}^{\alpha_{s_1 s_{1q}}}]
\equiv[x_{s_1},y_{s_{1u_{1}}}]^{p^{l_{s_1 s_{1u_{1}}}}}\hspace{5 mm}\mbox{mod }\gamma_{3}(F_n).
$$ 
So
\begin{eqnarray}
w(x_{1},...,x_{n})\equiv [x_{s_1},y_{s_{1u_{1}}}]^{p^{l_{s_1 s_{1u_{1}}}}}\displaystyle\prod_{s_1<i<j}[x_{i},x_{j}]^{\alpha_{i j}}\hspace{5 mm}\mbox{mod }\gamma_{3}(F_n).
\end{eqnarray}
If the remaining $\alpha_{ij}$ are all zero we can stop here noting that in the expression for $y_{s_{1u_{1}}}$ the exponent of $x_{s_{1u_{1}}}$ is non-zero and coprime to $p$ and $x_{s_{1u_{1}}}\neq x_{s_1}$. Otherwise, as in the previous lemma, we may substitute $x_{s_{1u_{1}}}\hspace{0.5 mm}\mbox{mod }F_{n}^{p^m}$ from $\prod_{s_1<i<j}[x_{i},x_{j}]^{\alpha_{i j}}$ in $(3)$ giving us
$$
w\equiv [x_{s_1},y_{s_{1u_{1}}}]^{p^{l_{s_{1} s_{1u_{1}}}}}[y_{s_{1u_{1}}},x_{s_{21}}^{\alpha'_{s_{1u_{1}}s_{21}}}...x_{s_{2r}}^{\alpha'_{s_{1u_{1}} s_{2r}}}]\displaystyle\underset{i,j\neq s_{1u_{1}}}{\prod_{s_{1}<i<j}}[x_{i},x_{j}]^{\alpha'_{i j}}\hspace{5 mm}\mbox{mod }R,
$$
where $\alpha'_{i j}\in\mathbb{Z}$ and $s_{1}<s_{21}<...<s_{2r}$. Note that $s_{2j}\neq s_{1u_{1}}$ for all $j$. Now, $\alpha'_{s_{1u_{1}} s_{2i}}=p^{l'_{s_{1u_{1}} s_{2i}}}m'_{s_{1u_{1}} s_{2i}}$ with the $m'_{s_{1u_{1}} s_{2i}}$ non-zero and coprime to $p$. Again choose $l'_{s_{1u_{1}} s_{2u_{2}}}$ minimal among the $l'_{s_{1u_{1}} s_{2i}}$ and let
$$
y_{s_{2u_{2}}}=x_{s_{21}}^{p^{l'_{s_{1u_{1}} s_{21}}-l'_{s_{1u_{1}} s_{2u_{2}}}}m'_{s_{1u_{1}} s_{21}}}...x_{s_{2r}}^{p^{l'_{s_{1u_{1}} s_{2r}}-l'_{s_{1u_{1}} s_{2u_{2}}}}m'_{s_{1u_{1}} s_{2r}}}.
$$
Note that the exponent of $x_{s_{2u_{2}}}$ in the expression for $y_{s_{2u_{2}}}$ is non-zero and coprime to $p$ and that $x_{s_{2u_{2}}}\neq x_{s_{1u_{1}}}$. Then, similarly to before,
\begin{eqnarray*}
[y_{s_{1u_{1}}},x_{s_{21}}^{\alpha'_{s_{1u_{1}} s_{21}}}...x_{s_{2r}}^{\alpha'_{s_{1u_{1}} s_{2r}}}] 
                        &\equiv& [y_{s_{1u_{1}}},y_{s_{2u_{2}}}]^{p^{l'_{s_{1u_{1}} s_{2u_{2}}}}}\hspace{5 mm}\mbox{mod }\gamma_{3}(F_n).
\end{eqnarray*}
Thus
\begin{eqnarray}
w=[x_{s_1},y_{s_{1u_{1}}}]^{p^{l_{s_1 s_{1u_{1}}}}}[y_{s_{1u_{1}}},y_{s_{2u_{2}}}]^{p^{l'_{s_{1u_{1}} s_{2u_{2}}}}}\displaystyle\prod_{\underset{i,j\neq s_{1u_{1}}}{s_{1}<i<j}}[x_{i},x_{j}]^{\alpha'_{ij}}\hspace{5 mm}\mbox{mod }R.
\end{eqnarray}
There are two cases to consider.
\newcounter{counter2}
\begin{list}{\textit{Case (\arabic{counter2}a):}}{\usecounter{counter2}}
\item The first case is when $l_{s_1 s_{1u_{1}}}\leq l'_{s_{1u_{1}} s_{2u_{2}}}$. We have
\begin{eqnarray*}
[x_{s_1},y_{s_{1u_{1}}}]^{p^{l_{s_1 s_{1u_{1}}}}}[y_{s_{1u_{1}}},y_{s_{2u_{2}}}]^{p^{l'_{s_{1u_{1}} s_{2u_{2}}}}}\equiv [y_{s_{1u_{1}}},y_{s_{1}}]^{p^{l_{s_1 s_{1u_{1}}}}}\hspace{5 mm}\mbox{mod }\gamma_{3}(F_n),
\end{eqnarray*}
where $y_{s_{1}}=x_{s_1}^{-1}y_{s_{2u_{2}}}^{p^{l'_{s_{1u_{1}} s_{2u_{2}}}-l_{s_1 s_{1u_{1}}}}}$. This gives
\begin{eqnarray}
w(x_{1},...,x_{n})\equiv[y_{s_{1u_{1}}},y_{s_{1}}]^{p^{l_{s_{1} s_{1u_{1}}}}}\displaystyle\prod_{\underset{i,j\neq s_{1u_{1}}}{s_{1}<i<j}}[x_{i},x_{j}]^{\alpha'_{ij}}\hspace{5 mm}\mbox{mod }R.
\end{eqnarray}
Note that in the expression for $y_{s_{1}}$ the exponent of $x_{s_{1}}$ is non-zero and coprime to $p$ as is the exponent of $x_{s_{1u_{1}}}$ in $y_{s_{1u_{1}}}$. In particular note that neither $x_{s_{1}}$ nor $x_{s_{1u_{1}}}$ appear in the rest of the expression for $w\mbox{ mod }R$ and that $x_{s_{1u_{1}}}\neq x_{s_{1}}$. We can set $y_{1}:=y_{s_{1u_{1}}}$ and $y_{2}:=y_{s_{1}}$ and repeat the algorithm for the rest of the commutators.
\item
The second case however is when $l_{s_{1} s_{1u_{1}}}>l'_{s_{1u_{1}} s_{2u_{2}}}$. We have
\begin{eqnarray*}
[x_{s_1},y_{s_{1u_{1}}}]^{p^{l_{s_{1} s_{1u_{1}}}}}[y_{s_{1u_{1}}},y_{s_{2u_{2}}}]^{p^{l'_{s_{1u_{1}} s_{2u_{2}}}}}\equiv[y_{s_{1u_{1}}},y_{t_{2u_{2}}}]^{p^{l'_{s_{1u_{1}} s_{2u_{2}}}}}\hspace{5 mm}\mbox{mod }\gamma_{3}(F_n)
\end{eqnarray*}
where $y_{t_{2u_{2}}}=x_{s_{1}}^{-p^{l_{s_{1} s_{1u_{1}}}-l'_{s_{1u_{1}} s_{2u_{2}}}}}y_{s_{2u_{2}}}$. This gives
\begin{eqnarray}
w(x_{1},...,x_{n})\equiv [y_{s_{1u_{1}}},y_{t_{2u_{2}}}]^{p^{l'_{s_{1u_{1}} s_{2u_{2}}}}}\displaystyle\prod_{\underset{i,j\neq s_{1u_{1}}}{s_{1}<i<j}}[x_{i},x_{j}]^{\alpha'_{ij}}\hspace{5 mm}\mbox{mod }R.
\end{eqnarray}
Here the exponent of $x_{s_{1u_{1}}}$ in $y_{s_{1u_{1}}}$ is non-zero and coprime to $p$ and the same is true for the exponent of $x_{s_{2u_{2}}}$ in $y_{t_{2u_{2}}}$. However, in contrast to Case ($1a$), whilst $x_{s_{1u_{1}}}$ does not appear in the rest of the expression for $w\mbox{ mod }R$ it is possible that $x_{s_{2u_{2}}}$ may. If it doesn't then set $y_{1}:=y_{s_{1u_{1}}}$ and $y_{2}:=y_{t_{2u_{2}}}$ and repeat the algorithm for the rest of the commutators, if there are any. If it does substitute it out of the expression in $(6)$ using 
\begin{eqnarray*}
y_{t_{2u_{2}}} &=& x_{s_{1}}^{-p^{l_{s_{1} s_{1u_{1}}}-l'_{s_{1u_{1}} s_{2u_{2}}}}}y_{s_{2u_{2}}}\\ &=& x_{s_{1}}^{-p^{l_{s_{1} s_{1u_{1}}}-l'_{s_{1u_{1}} s_{2u_{2}}}}}x_{s_{21}}^{p^{l'_{s_{1u_{1}} s_{21}}-l'_{s_{1u_{1}} s_{2u_{2}}}}m'_{s_{1u_{1}} s_{21}}}...x_{s_{2r}}^{p^{l'_{s_{1u_{1}} s_{2r}}-l'_{s_{1u_{1}} s_{2u_{2}}}}m'_{s_{1u_{1}} s_{2r}}}.
\end{eqnarray*}
This gives
$$
w\equiv [y_{s_{1u_{1}}},y_{t_{2u_{2}}}]^{p^{l'_{s_{1u_{1}} s_{2u_{2}}}}}[y_{t_{2u_{2}}},x_{s_{31}}^{\alpha''_{t_{2u_{2}} s_{31}}}...x_{s_{3k}}^{\alpha''_{t_{2u_{2}} s_{3k}}}]\displaystyle\underset{i,j\neq s_{1u_{1}},s_{2u_{2}}}{\prod_{i<j}}[x_{i},x_{j}]^{\alpha''_{i j}}\hspace{5 mm}\mbox{mod }R,
$$
where, as usual, $\alpha''_{t_{2u_{2}} s_{3j}}=p^{l''_{t_{2u_{2}} s_{3j}}}m''_{t_{2u_{2}} s_{3j}}$ for all $j$. Now choose $l''_{t_{2u_{2}} s_{3u_{3}}}$ minimal among the $l''_{t_{2u_{2}} s_{3j}}$ and let
$$
y_{s_{3u_{3}}}=x_{s_{31}}^{p^{l''_{t_{2u_{2}} s_{31}}-l''_{t_{2u_{2}} s_{3u_{3}}}}m''_{t_{2u_{2}} s_{31}}}...x_{s_{3k}}^{p^{l''_{t_{2u_{2}} s_{3k}}-l''_{t_{2u_{2}} s_{3u_{3}}}}m''_{t_{2u_{2}} s_{3k}}}.
$$
Then, as before, the expression for $w$ becomes
\begin{eqnarray}
w\equiv [y_{s_{1u_{1}}},y_{t_{2u_{2}}}]^{p^{l'_{s_{1u_{1}} s_{2u_{2}}}}}[y_{t_{2u_{2}}},y_{s_{3u_{3}}}]^{p^{l''_{t_{2u_{2}} s_{3u_{3}}}}}\displaystyle\underset{i,j\neq s_{1u_{1}},s_{2u_{2}}}{\prod_{i<j}}[x_{i},x_{j}]^{\alpha''_{i j}}\hspace{5 mm}\mbox{mod }R.
\end{eqnarray}
\end{list}
Again there are two cases. 
\newcounter{counter3}
\begin{list}{\textit{Case (\arabic{counter3}b):}}{\usecounter{counter3}}
\item
If $ l'_{s_{1u_{1}} s_{2u_{2}}}\leq l''_{t_{2u_{2}} s_{3u_{3}}}$ then $w$ becomes
\begin{eqnarray}
w(x_{1},...,x_{n})\equiv [y_{t_{2u_{2}}},y_{v_{1{u_1}}}]^{p^{l'_{s_{1u_{1}} s_{2u_{2}}}}}\displaystyle\underset{i,j\neq s_{1u_{1}},s_{2u_{2}}}{\prod_{i<j}}[x_{i},x_{j}]^{\alpha''_{i j}}\hspace{5 mm}\mbox{mod }R,
\end{eqnarray}
where $y_{v_{1u_{1}}}=y_{s_{1u_{1}}}^{-1}y_{s_{3u_{3}}}^{p^{l''_{t_{2u_{2}} s_{3u_{3}}}-l'_{s_{1u_{1}} s_{2u_{2}}}}}$ and we are done as in Case ($1a$). 
\item If however $l'_{s_{1u_{1}}  s_{2u_{2}}}>l''_{t_{2u_{2}} s_{3u_{3}}}$ we can repeat the algorithm described in Case $(2a)$ above and we will end up with an expression of the form
\begin{eqnarray}
w\equiv [y_{t_{2u_{2}}},y_{t_{3u_{3}}}]^{p^{l''_{t_{2u_{2}} s_{3u_{3}}}}}[y_{t_{3u_{3}}},y_{s_{4u_{4}}}]^{p^{l'''_{t_{3u_{3}} s_{4u_{4}}}}}\displaystyle\underset{i,j\neq s_{2u_{2}},s_{3u_{3}}}{\prod_{i<j}}[x_{i},x_{j}]^{\alpha'''_{i j}}\hspace{5 mm}\mbox{mod }R,
\end{eqnarray}
where $y_{t_{3u_{3}}}$ and $y_{s_{4u_{4}}}$ are words in the $x_{i}$s with the exponents of $x_{s_{3u_{3}}}$ and $x_{s_{4u_{4}}}$ non-zero and coprime to $p$ respectively. Again there are two cases. If in $(9)$ we have $l''_{t_{2u_{2}}s_{3u_{3}}}\leq l'''_{t_{3u_{3}} s_{4u_{4}}}$ then, as in Cases ($1a$) and ($1b$), we are done. If not we have $l_{s_{1} s_{1u_{1}}}>l'_{s_{1u_{1}} s_{2u_{2}}}>l''_{t_{2u_{2}} s_{3u_{3}}}>l'''_{t_{3u_{3}} s_{4u_{4}}}\geq...\geq0$ and we keep going until the algorithm stops, i.e. we are in the first case.
\end{list}
We will eventually end up with an expression like $(5)$ or $(8)$. If the remaining $\alpha''_{i j}$, in $(8)$ say, are all zero we can stop here. Otherwise, set $y_{1}:=y_{t_{2u_{2}}}$ and $y_{2}:=y_{v_{1u_{1}}}$ and repeat the algorithm on the rest of the commutators. Eventually, after relabelling, we have an expression for $w$ of the desired form.
\end{proof}
We are now ready to prove the theorem. Let $w(x_{1},...,x_{n})$ be any group word and fix a group $G\in\mathfrak{N}_{2,m}$.  From $(1)$ we have
\begin{eqnarray*}
w(x_{1},...,x_{n})\equiv x_{1}^{\alpha_{1}}...x_{n}^{\alpha_{n}}(\displaystyle\prod_{i<j}[x_{i},x_{j}]^{\beta_{i j}})\hspace{5 mm}\mbox{mod }\gamma_{3}(F_n).
\end{eqnarray*}
By Lemma \ref{LemmaVariablePower} $w$ is \textit{$G$-equivalent} to the word
\begin{eqnarray*}
w'(y_{1},x_{2},...,x_{n})=y_{1}^{p^{l}}\displaystyle\prod_{i<j}[x_{i},x_{j}]^{\beta'_{i j}}
[y_{1},h]
\end{eqnarray*}
for some $l$, $\beta'_{ij}\in\mathbb{Z}$, where $h$ is some word in the $x_{i}$s with $i\neq 1$. As in the proof of Lemma \ref{LemmaVariablePower} we can substitute $x_{1}$ out of the expression above giving us
\begin{eqnarray*}
w'(y_{1},x_{2},...,x_{n})\equiv y_{1}^{p^{l}}\displaystyle\prod_{1<i<j}[x_{i},x_{j}]^{\beta''_{i j}}
[y_{1},h']
\hspace{5 mm}\mbox{mod }R
\end{eqnarray*}
for some $\beta''_{ij}\in\mathbb{Z}$, where $R$ denotes $\gamma_{3}(F_{n})F_{n}^{p^{m}}$ and $h'$ is some word in the $x_{i}$s with $i\neq1$. 
Then by Lemma \ref{LemmaVariableCommutator} $w'$ is \textit{$G$-equivalent} to the word
$$
v(y_{1},y_{2},...,y_{2k+1},x_{2k+2},...,x_{n})=y_{1}^{p^{l}}\displaystyle\prod_{i=1}^{k}[y_{2i},y_{2i+1}]^{\gamma_{2i\hspace{0.5 mm}2i+1}}
[y_{1},h'']
$$
for some $\gamma_{ij}\in\mathbb{Z}$, where $2k+1\leq n$ and $h''$ is some word in the $y_{i}$s and $x_{j}$s for $i=2,...,2k+1$ and $j=2k+2,...,n$. We write this as
$$
v(y_{1},y_{2},z_{2},...,y_{k+1},z_{k+1},x_{2k+2},...,x_{n})=y_{1}^{p^{l}}\displaystyle\prod_{i=2}^{k+1}[y_{i},z_{i}]^{\upsilon_i}
[y_{1},h''],
$$
where $\upsilon_{i+1}=\gamma_{2i\hspace{0.5 mm}2i+1}$. Consider the word map given by $v$. The commutator map from $G\times G$ to $G$ sending a pair $(x,y)$ to its commutator $[x,y]$ is a bilinear map. Fixing the $z_{i}$ for all $i$ and restricting $y_{1}$ to the derived group of $G$ we obtain a linear map
$$
v': G'\times G^{(n-1-k)}\rightarrow G',
$$
defined by $v'(y_{1},...,y_{k+1},x_{2k+2},...,x_{n})=v(y_{1},y_{2},z_{2},...,y_{k+1},z_{k+1},x_{2k+2},...,x_{n})$.
Now $|\{\textbf{g}\in G'\times G^{(n-1-k)} : v'(\textbf{g})=1\}|\geq|G|^{n-1-k}$ and so $N(G,v)\geq|G|^{n-1}$ since we had $|G|$ choices for each of the $z_{i}$. Thus $P(G,v)\geq\frac{1}{|G|}$ and the result follows since $P(G,w)=P(G,v)$.



\end{document}